\newtheorem{theorem}{Theorem}
\newtheorem{corollary}[theorem]{Corollary}
\newtheorem{proposition}[theorem]{Proposition}
\theoremstyle{definition}
\newtheorem{remark}[theorem]{Remark}
\newtheorem*{theorem*}{Theorem}
\numberwithin{equation}{section} \numberwithin{figure}{section}
\numberwithin{equation}{section}
\newcommand{\E}{\E_{\infty}}
\def\YEAR{\year}\newcount\VOL\VOL=\YEAR\advance\VOL by-1995
\def\firstpage{1}\def\lastpage{1000}
\def\received{}\def\revised{}
\def\communicated{}
\def\magnification{\afterassignment\m@g\count@}
\def\m@g{\mag=\count@\hsize6.5truein\vsize8.9truein\dimen\footins8truein}
\font\eightrm=cmr8
\font\caps=cmcsc10                    
\font\Caps=cmcsc10 scaled \magstep1   
\renewcommand{\@evenhead}{%
    \ifnum\thepage>\lastpage\rlap{\thepage}\hfill%
    \else\rlap{\thepage}\slshape\leftmark\hfill{\caps\SAuthor}\hfill\fi}%
\renewcommand{\@oddhead}{%
    \ifnum\thepage=\firstpage{\DocMath\hfill\llap{\thepage}}%
    \else{\slshape\rightmark}\hfill{\caps\STitle}\hfill\llap{\thepage}\fi}%
\def\TSkip{\bigskip}
\newbox\TheTitle{\obeylines\gdef\GetTitle #1
\ShortTitle  #2
\SubTitle    #3
\Author      #4
\ShortAuthor #5
\EndTitle
{\setbox\TheTitle=\vbox{\baselineskip=20pt\let\par=\cr\obeylines%
\halign{\centerline{\Caps##}\cr\noalign{\medskip}\cr#1\cr}}%
	\copy\TheTitle\TSkip\TSkip%
\def\next{#2}\ifx\next\empty\gdef\STitle{#1}\else\gdef\STitle{#2}\fi%
\def\next{#3}\ifx\next\empty%
    \else\setbox\TheTitle=\vbox{\baselineskip=20pt\let\par=\cr\obeylines%
    \halign{\centerline{\caps##} #3\cr}}\copy\TheTitle\TSkip\TSkip\fi%
\centerline{\caps #4}\TSkip\TSkip%
\def\next{#5}\ifx\next\empty\gdef\SAuthor{#4}\else\gdef\SAuthor{#5}\fi%
\ifx\received\empty\relax
    \else\centerline{\eightrm Received: \received}\fi%
\ifx\revised\empty\TSkip%
    \else\centerline{\eightrm Revised: \revised}\TSkip\fi%
\ifx\communicated\empty\relax
    \else\centerline{\eightrm Communicated by \communicated}\fi\TSkip\TSkip%
\catcode'015=5}}\def\Title{\obeylines\GetTitle}
\def\Abstract{\begingroup\narrower
    \parskip=\medskipamount\parindent=0pt{\caps Abstract. }}
\def\EndAbstract{\par\endgroup\TSkip}
\long\def\MSC#1\EndMSC{\def\arg{#1}\ifx\arg\empty\relax\else
     {\par\narrower\noindent%
     2010 Mathematics Subject Classification: #1\par}\fi}
\long\def\KEY#1\EndKEY{\def\arg{#1}\ifx\arg\empty\relax\else
	{\par\narrower\noindent Keywords and Phrases: #1\par}\fi\TSkip}
\newbox\TheAdd\def\Addresses{\vfill\copy\TheAdd\vfill
    \ifodd\number\lastpage\vfill\eject\phantom{.}\vfill\eject\fi}
{\obeylines\gdef\GetAddress #1
\Address #2 
\Address #3
\Address #4
\EndAddress
{\def\xs{4.3truecm}\parindent=0pt
\setbox0=\vtop{{\obeylines\hsize=\xs#1\par}}\def\next{#2}
\ifx\next\empty 
     \setbox\TheAdd=\hbox to\hsize{\hfill\copy0\hfill}
\else\setbox1=\vtop{{\obeylines\hsize=\xs#2\par}}\def\next{#3}
\ifx\next\empty 
     \setbox\TheAdd=\hbox to\hsize{\hfill\copy0\hfill\copy1\hfill}
\else\setbox2=\vtop{{\obeylines\hsize=\xs#3\par}}\def\next{#4}
\ifx\next\empty\ 
     \setbox\TheAdd=\vtop{\hbox to\hsize{\hfill\copy0\hfill\copy1\hfill}
                \vskip20pt\hbox to\hsize{\hfill\copy2\hfill}}
\else\setbox3=\vtop{{\obeylines\hsize=\xs#4\par}}
     \setbox\TheAdd=\vtop{\hbox to\hsize{\hfill\copy0\hfill\copy1\hfill}
	        \vskip20pt\hbox to\hsize{\hfill\copy2\hfill\copy3\hfill}}
\fi\fi\fi\catcode'015=5}}\gdef\Address{\obeylines\GetAddress}
\def\LOCAL{\jobname.files}
\begin{document}
\Title Adams' cobar construction revisited
\ShortTitle Adams' cobar construction
\SubTitle   
\Author Manuel Rivera
\ShortAuthor 
\EndTitle
\Abstract We give a short and streamlined proof of the following statement recently proven by the author and M. Zeinalian: the cobar construction of the dg coassociative coalgebra of normalized singular chains on a path-connected pointed space is naturally quasi-isomorphic as a dg associative algebra to the singular chains on the based loop space. This extends a classical theorem of F. Adams originally proven for simply connected spaces. Our proof is based on relating the cobar functor to the left adjoint of the homotopy coherent nerve functor.
\EndAbstract
\MSC 57T30, 55P35, 57T25
\EndMSC
\KEY cobar construction, based loop space
\EndKEY
\Address 
\Address Purdue University, 150 N University St., West Lafayette, IN, USA, 47906
\Address
\Address
\EndAddress

\section{Introduction}
In 1956, F. Adams described a natural algebraic construction, called the \textit{cobar construction}, which passes from a differential graded (dg) coassociative coalgebra to a differential graded (dg) associative algebra \cite{Ad56}. Adams showed that when applied to a suitable chain model of a simply connected space, the cobar construction yields a chain model for its based loop space. Both the cobar construction and its relationship to the based loop space have been of fundamental importance in various contexts such as homological algebra, rational homotopy theory, the theory of operads, symplectic geometry, and representation theory,  to name a few.
\medskip

In 2016, the author and M. Zeinalian proved that Adams' theorem extends to any path-connected space \cite{RiZe16}. This result was then used to explain the sense in which the algebraic structure of the singular chains determines the fundamental group and to show that the dg coalgebra of singular chains detects weak homotopy equivalences between spaces \cite{RiZe18},\cite{RWZ18}. In this article we provide a short, direct, and streamlined proof of this extension of Adams' theorem, which was obtained in \cite{RiZe16} from deeper categorical and combinatorial considerations. In particular, in contrast to \cite{RiZe16}, in this article we do not make use of the theory of cubical sets with connections.
\medskip

Let $(X,b)$ be a pointed topological space. Let $(C_*(X), \partial, \Delta)$ be the dg coassociative coalgebra of singular chains on $X$  with $\mathbb{Z}$-coefficents equipped with the Alexander-Whitney coproduct $\Delta: C_*(X) \to C_*(X) \otimes C_*(X)$. Let $C'_*(X,b)$ be the subcomplex of $C_*(X)$ generated by those continuous maps $\sigma: |\Delta^n| \to X$ that send the vertices of the topological $n$-simplex $|\Delta^n|$ to $b$. Let $C_*(X,b)$ be the quotient of $C'_*(X,b)$ by the subcomplex of degenerate chains. The chain complex $C_*(X,b)$ inherits a dg coassociative coalgebra structure $(C_*(X,b), \partial, \Delta)$. The main theorem of this article, which we prove in section 3, is the following.

\begin{theorem} If $(X,b)$ is a path-connected pointed space then the cobar construction of $(C_*(X,b), \partial, \Delta)$ is naturally quasi-isomorphic as a dg associative algebra to $C_*(\Omega_bX)$, the singular chains on the topological monoid of (Moore) loops in $X$ based at $b$. 
\end{theorem}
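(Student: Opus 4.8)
The plan is to route the comparison through the left adjoint $\mathfrak{C}\colon\mathbf{sSet}\to\mathbf{sCat}$ of the homotopy coherent nerve $\mathfrak{N}$, splitting the theorem into a purely algebraic identification and a purely homotopical one. First I would pass to the singular complex: the normalized coalgebra $C_*(X,b)$ coincides with the normalized chain coalgebra $N_*(S)$ of the reduced simplicial set $S=\mathrm{Sing}_b(X)$, the Eilenberg subcomplex of $\mathrm{Sing}(X)$ consisting of simplices sending every vertex to $b$; its unique $0$-simplex makes it reduced, and the Alexander--Whitney coproduct is the usual one on $N_*(S)$. Because $X$ is path-connected, the inclusion $S\hookrightarrow\mathrm{Sing}(X)$ is a weak equivalence, which is exactly where the hypothesis enters, so $\Omega_b|S|\simeq\Omega_b X$ and it suffices to compare the cobar construction $\Omega N_*(S)$ with $C_*(\Omega_b|S|)$. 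Since $S$ is reduced, $\mathfrak{C}(S)$ has a single object and is thus a simplicial monoid under composition, so the theorem reduces to an algebraic identification of $\Omega N_*(S)$ with the chains of this monoid together with a homotopical identification of the monoid with the based loop space.

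For the algebraic half I would use the explicit form of $\mathfrak{C}$. On a standard simplex, $\mathfrak{C}(\Delta^n)(i,j)$ is the nerve of the poset of subsets $T$ with $\{i,j\}\subseteq T\subseteq\{i,i+1,\dots,j\}$, which is a cube of dimension $j-i-1$; for general $S$ the endomorphism space $\mathfrak{C}(S)(b,b)$ is computed, following Dugger and Spivak, as a colimit indexed by necklaces in $S$. These cubes endow $\mathfrak{C}(S)(b,b)$ with a natural cubical structure, and I claim the normalized cubical chains $N^\square\big(\mathfrak{C}(S)(b,b)\big)$ are naturally isomorphic as a dg algebra to $\Omega N_*(S)$. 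Concretely, a nondegenerate $n$-simplex of $S$ contributes a single algebra generator in degree $n-1$, matching the desuspension $s^{-1}\overline{N_*(S)}$ underlying the tensor algebra of the cobar construction; the two summands of the cobar differential, the internal boundary and the term assembled from the Alexander--Whitney coproduct, correspond respectively to the two families of faces of the cubes, while necklace concatenation realizes the free product. The remaining work is bookkeeping: reconciling signs, the treatment of degeneracies under normalization, and compatibility with composition.

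For the homotopical half I would invoke that $\mathfrak{C}\dashv\mathfrak{N}$ is a Quillen equivalence between the Joyal model structure on $\mathbf{sSet}$ and the Bergner model structure on $\mathbf{sCat}$. As $S=\mathrm{Sing}_b(X)$ is a Kan complex, its mapping spaces under $\mathfrak{C}$ model the correct derived mapping spaces, so the simplicial monoid $\mathfrak{C}(S)(b,b)$ is weakly equivalent to the based loop space of $|S|$. To compare this with the strict topological monoid of Moore loops $\Omega_b X$ I would produce a natural zig-zag of weak equivalences of monoids, for instance through Kan's loop group $G(S)$, which models $\Omega|S|$ and can be compared with $\mathfrak{C}(S)(b,b)$, and then apply normalized chains, which sends each such weak equivalence to a quasi-isomorphism of dg algebras. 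Chaining these with the isomorphism of the previous paragraph and with the quasi-isomorphism $C_*(\Omega_b|S|)\xrightarrow{\sim}C_*(\Omega_b X)$ coming from path-connectedness yields the asserted natural quasi-isomorphism.

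The main obstacle is the algebraic identification, and it is precisely where the classical approach fails for non-simply-connected $X$: Adams' original comparison is controlled by the cobar (Eilenberg--Moore) spectral sequence, which need not converge without simple connectivity, so one cannot read off a homology isomorphism this way. Routing the comparison through $\mathfrak{C}$ trades that spectral-sequence argument for an on-the-nose combinatorial match between the cobar complex and the cubes produced by the rigidification; the difficulty is therefore concentrated in verifying that the cobar product and differential agree with necklace concatenation and the cubical faces, with all signs, degeneracies, and normalizations accounted for. Once this is in place the homotopical half is standard infinity-categorical input, and naturality in $(X,b)$ is automatic since every functor in the chain is natural.
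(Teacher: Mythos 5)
Your proposal is correct and follows essentially the same route as the paper: reduce to $S=\mathrm{Sing}(X,b)$, identify the cobar construction $\mathbf{\Omega}(C_*(S),\partial,\Delta)$ with a cubical chain model of the endomorphism monoid $\mathfrak{C}(S)(b,b)$ built from the Dugger--Spivak necklace colimit, and then identify that monoid homotopically with the based loop space. Two points of divergence are worth noting. First, for the homotopical half the paper does not go through Kan's loop group: it uses Hinich's natural weak equivalence $\mathcal{B}(\mathcal{C})\to\mathfrak{N}(\mathcal{C})$ for fibrant groupoids together with the classical equivalence $K\simeq\mathcal{B}(\mathrm{Sing}(\Omega_x|K|))$ and the counit of the Quillen equivalence $\mathfrak{C}\dashv\mathfrak{N}$, producing a direct zig-zag $\mathfrak{C}(K)\xrightarrow{\simeq}\mathrm{Sing}(\Omega_x|K|)$ of simplicial monoids; your route via $G(S)$ should also work but requires a separate comparison of $\mathfrak{C}(S)(b,b)$ with the loop group, which is the content of Hess--Tonks rather than of this paper. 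Second, your sketch elides one genuine step: the ``normalized cubical chains'' of $\mathfrak{C}(S)(b,b)$ (i.e.\ the colimit over necklaces of $C^{\square}_*(\square^{\mathrm{dim}(T)})$, the paper's $F^{\square}_*(K)$) is isomorphic to the cobar construction, but to connect it with $C_*(\Omega_b X)$ you must still compare it, as a dg algebra, with the \emph{simplicial} normalized chains of the simplicial monoid $\mathfrak{C}(S)(b,b)$; the paper does this with a multiplicative map $[f,\iota_n]\mapsto[f,e^{\times n}]$ shown to be a quasi-isomorphism by acyclic models over the category of necklaces. That comparison is fillable but is not mere bookkeeping, and without it your chain of equivalences does not close up.
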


Instead of comparing spectral sequences as in Adams' original paper, which required a connectivity hypothesis, we prove the above theorem by relating the cobar construction to (the left adjoint of) a particular model for the classifying space of a fibrant simplicial groupoid called the \textit{homotopy coherent nerve functor} introduced by Cordier in \cite{Co82} and studied in more detail in \cite{Hi07}, \cite{DuSp11}, \cite{DuSp211}, and \cite{Lu09}.
\medskip

There are other extensions of Adams' cobar theorem in the literature. Kontsevich proposed to formally add strict inverses to all the $1$-simplices of a simplicial complex in the cobar construction of its chains to obtain a chain model for the based loop space \cite{Ko09}. An explicit relationship between this model and the Kan loop group construction was established in \cite{HeTo10}. More recently, in \cite{CHL18}, both \cite{RiZe16} and \cite{HeTo10} are placed in the same framework of derived localizations. 

\section{Preliminaries}
\subsection{Cobar construction}
We assume all (co)algebras are (co)unital. Let $\mathbb{K}$ be a commutative ring with unit and let $\otimes = \otimes_{\mathbb{K}}$. A dg coassociative $\mathbb{K}$-coalgebra $(C_*, \partial: C_* \to C_{*-1}, \Delta: C_* \to C_* \otimes C_*)$ is connected if $C_i=0$ for $i<0$ and the counit defines an isomorphism $C_0 \cong \mathbb{K}$. Let $\mathbf{dgCoalg}_{\mathbb{K}}^0$ be the category of connected dg coassociative $\mathbb{K}$-coalgebras and let $\mathbf{dgAlg}_{\mathbb{K}}$ be the category of dg associative $\mathbb{K}$-algebras.
The \textit{cobar construction} is a functor
$$\mathbf{\Omega}: \mathbf{dgCoalg}_{\mathbb{K}}^0 \to \mathbf{dgAlg}_{\mathbb{K}}$$ defined as follows.
For any $(C_*, \partial, \Delta) \in \mathbf{dgCoalg}^0_{\mathbb{K}}$ define the underlying graded $\mathbb{K}$-algebra of $\mathbf{\Omega}(C_*, \partial, \Delta)$ to be the graded tensor algebra
$$ T(s^{-1}C_{*>0} )= \mathbb{K} \oplus s^{-1}C_{*>0} \oplus (s^{-1}C_{*>0})^{\otimes 2} \oplus (s^{-1}C_{*>0})^{\otimes 3} \oplus ...  ,$$
where $C_{*>0}$ is the quotient of $\mathbb{K}$-modules $C_*/C_0$ and $s^{-1}$ is the shift functor, defined by $(s^{j}M)_i := M_{i-j}$ for any integer $j$ and any graded $\mathbb{K}$-module $M.$  The generators of $T(s^{-1}C_{*>0} )$ are monomials which will be denoted by $[ s^{-1} \sigma_1| s^{-1} \sigma_2|...|s^{-1} \sigma_k],$ where $\sigma_i \in C_{*>0}$. Multiplication is given by concatenation of monomials. The differential of $\mathbf{\Omega}(C_*, \partial, \Delta)$ is defined by extending the linear map
$$- s^{-1} \circ \partial  \circ s^{+1} + (s^{-1} \otimes s^{-1}) \circ \Delta \circ s^{+1}: s^{-1}C_{*>0} \to s^{-1}C_{*>0} \oplus (s^{-1}C_{*>0} \otimes s^{-1}C_{*>0})$$
as a derivation (by the Leibniz rule) to all monomials to obtain a linear map of degree $-1$
$$D: T(s^{-1}C_{*>0} )\to T(s^{-1}C_{*>0} ).$$ The compatibility of $\partial$ and $\Delta$, the coassociativity of $\Delta$, and $\partial^2=0$, together imply $D^2=0$.

\begin{remark} The cobar functor does not send quasi-isomorphisms of dg coalgebras to quasi-isomorphisms of dg algebras. A counterexample is discussed in Proposition 2.4.3 of \cite{LoVa12}. 
\end{remark}

\subsection{Rigidification and homotopy coherent nerve}
Let $\mathbf{sSet}$ and $\mathbf{sCat}$ be the categories of simplicial sets and categories enriched over the monoidal category of simplicial sets with cartesian product, respectively. We call the objects of $\mathbf{sCat}$ \textit{simplicial categories.}.  Denote by $
\Delta^n \in \mathbf{sSet}$ the standard $n$-simplex. We recall the definition of the \textit{rigidification functor} 
 $$\mathfrak{C}: \mathbf{sSet} \to \mathbf{sCat}.$$ 
following \cite{Lu09}. Given integers $0 \leq  i \leq j$ denote by $P_{i,j}$ the category whose objects are all the subsets of $\{i, i+1, ..., j\}$ containing both $i$ and $j$ and morphisms are inclusions. 
For each integer $n \geq 0$ define $\mathfrak{C}(\Delta^n) \in \mathbf{sCat}$ to have the set $\{0, ... , n\}$ as objects and if $i \leq j$, define $\mathfrak{C}(\Delta^n)(i,j):= N(P_{i,j})$, where $N$ is the ordinary nerve functor.\footnote{Note there is a natural isomorphism of simplicial sets $\mathfrak{C}(\Delta^n)(i,j) \cong (\Delta^1)^{\times (j-i-1)}$ if $i<j$ and  $\mathfrak{C}(\Delta^n)(i,i) \cong \Delta^0$.}  If $j < i$, $\mathfrak{C}(\Delta^n)(i,j):= \emptyset.$ The composition law in $\mathfrak{C}(\Delta^n)$ is induced by the functor $P_{j,k} \times P_{i,j} \to P_{i,k}$ given by union of sets. The assignment $[n] \mapsto \mathfrak{C}(\Delta^n)$ defines a cosimplicial object in $\mathbf{sCat}$. For any $S \in \mathbf{sSet}$ define 
 $$\mathfrak{C}(S):= \underset{{\Delta^n \to S} }{\text{colim }} \mathfrak{C}(\Delta^n).$$
The functor $\mathfrak{C}$ is the left adjoint of the \textit{homotopy coherent nerve functor}
$$\mathfrak{N}: \mathbf{sCat} \to \mathbf{sSet},$$ which is given by
$$\mathfrak{N}(\mathcal{C})_n=\text{Hom}_{\mathbf{sCat}}(\mathfrak{C}(\Delta^n), \mathcal{C}).$$
We recall a description of the mapping spaces $\mathfrak{C}(S)(x,y)$ given in \cite{DuSp11}. By a \textit{necklace} we mean a simplicial set of the form $T=\Delta^{n_1} \vee ... \vee \Delta^{n_k}$ obtained from an ordered sequence of standard simplices, which we call the beads of $T$, with $n_i >0$ by gluing the final vertex of one to the first vertex of its successor. Define the dimension of a necklace by $\text{dim}(T):= n_1+...+n_k-k$. Denote by $\alpha_T$ and $\omega_T$ the first and last vertices of $T$ using the natural ordering on the set vertices. We declare $\Delta^0$ to be a necklace of dimension $0$. Necklaces form a category $\mathbf{Nec}$ with morphisms being maps of simplicial sets $f: T \to T'$ such that $f(\alpha_T)=\alpha_{T'}$ and $f(\omega_T)=\omega_{T'}$. For $S \in \mathbf{sSet}$ and $x,y\in S_0$ denote by $(\mathbf{Nec} \downarrow S)_{x,y}$ the full subcategory of the over category $\mathbf{Nec} \downarrow S$ consisting of those maps $f: T\to S$ such that $f(\alpha_T)=x$ and $f(\omega_T)=y$. The category $\mathbf{Nec}$ has a (non-symmetric) monoidal structure $$\vee: \mathbf{Nec} \times \mathbf{Nec} \to \mathbf{Nec}$$ given by concatenation of necklaces with identity object $\Delta^0$. 

\begin{proposition} (Proposition 4.3 \cite{DuSp11}) For any $S \in \mathbf{sSet}$ and $x,y \in S_0$, there are natural isomorphisms of simplicial sets
$$\mathfrak{C}(S)(x,y) \cong \underset{(f: T \to S) \in (\mathbf{Nec} \downarrow S)_{x,y}}{\emph{colim}} \mathfrak{C}(T)(\alpha_T, \omega_T) \cong \underset{(f: T \to S) \in (\mathbf{Nec} \downarrow S)_{x,y}}{\emph{colim}}  (\Delta^{1})^{\times\emph{dim}(T)}$$ The composition map $\mathfrak{C}(S)(y,z) \times \mathfrak{C}(S)(x,y) \to \mathfrak{C}(S)(x,z)$ is given by $[(f': T' \to S), \sigma'] \times [(f: T\to S), \sigma] \mapsto [(f\vee f': T\vee T'\to S), \sigma \times \sigma']$, where the notation $[(f:T\to S),\sigma] \in \mathfrak{C}(S)(x,y)$ denotes the equivalence class of the pair $((f: T\to S), \sigma)$ in the colimit. 
\end{proposition}
\begin{remark} The second isomorphism in the above proposition, as explained in Corollary 3.8 of \cite{DuSp11}, is induced by a natural isomorphism $$\phi_T: \mathfrak{C}(T)(\alpha_T, \omega_T)  \cong (\Delta^1)^{\times \text{dim}(T)}.$$
An essential feature of $\phi$ is that if $\iota: T' \hookrightarrow T$ is an injection in $\mathbf{Nec}$ and $\text{dim}(T')= \text{dim}(T)-1$ then the induced map of simplicial cubes $$\phi_T \circ \mathfrak{C}(\iota) \circ \phi_{T'}^{-1}: (\Delta^1)^{ \times \text{dim}(T)-1} \to (\Delta^1)^{\times \text{dim}(T)}$$
is a cubical face inclusion, namely,  $(\Delta^1)^{\times \text{dim}(T)-1}$ is mapped injectively into one of the boundary faces of $(\Delta^1)^{\times \text{dim}(T)}$. Moreover, all cubical face inclusions are realized by maps of necklaces in this way. 

\end{remark}
\begin{remark}
The adjunction given by the functors $\mathfrak{C}: \mathbf{sSet} \to \mathbf{sCat}$ and $\mathfrak{N}: \mathbf{sCat} \to \mathbf{sSet}$ defines a Quillen equivalence between the Joyal model structure on $\mathbf{sSet}$ and the Bergner model structure on $\mathbf{sCat}$. This is proven in Chapter 2 of \cite{Lu09} and a different proof is given in \cite{DuSp211}. The Quillen model structure on $\mathbf{sSet}$ is a Bousfield localization of the Joyal model structure. In particular, a weak homotopy equivalence between simplicial sets which are fibrant objects in the Quillen model structure (Kan complexes) is a weak equivalence in the Joyal model structure. This is proven directly in Proposition 17.2.8 of \cite{Ri14}. 
\end{remark}

We recall Corollary 2.6.3 of \cite{Hi07} where the relationship between homotopy coherent nerve $\mathfrak{N}$ and the classifying space functor was established. A \textit{fibrant groupoid} $\mathcal{C}$ is a simplicial category such that for all $x,y \in \mathcal{C}$, $\mathcal{C}(x,y)$ is a Kan complex and each $\sigma\in \mathcal{C}(x,y)_0$ is invertible up to homotopy. For any $\mathcal{C} \in \mathbf{sCat}$, let $\mathcal{B}(\mathcal{C})$ be the simplicial set determined by the diagonal of the bisimplicial set obtained by applying (level-wise) the ordinary nerve to $\mathcal{C}$.
\begin{proposition} \cite{Hi07}
There is a natural map $\mathcal{B}(\mathcal{C}) \to \mathfrak{N}(\mathcal{C})$  which is a weak homotopy equivalence when $\mathcal{C}$ is a fibrant groupoid. 
\end{proposition}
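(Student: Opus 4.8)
The plan is to reduce the statement to a computation of based loop spaces, where the fibrant groupoid hypothesis can be exploited in place of Adams' connectivity assumption. First I would record that $\mathfrak{N}(\mathcal{C})$ is a Kan complex: since $\mathcal{C}$ is a fibrant groupoid it is in particular a fibrant object of the Bergner model structure, so by the Quillen equivalence recalled above $\mathfrak{N}(\mathcal{C})$ is fibrant in the Joyal model structure, i.e. a quasi-category; and the hypothesis that every $\sigma \in \mathcal{C}(x,y)_0$ is invertible up to homotopy says that every edge of $\mathfrak{N}(\mathcal{C})$ is an equivalence, so $\mathfrak{N}(\mathcal{C})$ is a Kan complex. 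By the remark comparing the Joyal and Quillen structures, it then suffices to prove that the natural map is a weak homotopy equivalence in the usual sense.

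Next I would organize the comparison by base objects. The natural map respects the decompositions of both sides into path components: an $n$-simplex of $\mathcal{B}(\mathcal{C})$ is a strictly composable chain $x_0 \to \cdots \to x_n$ of $n$-simplices of the mapping spaces, while an $n$-simplex of $\mathfrak{N}(\mathcal{C})$ is a simplicial functor $\mathfrak{C}(\Delta^n) \to \mathcal{C}$, which in particular selects objects $x_0, \dots, x_n$; the comparison sends a composable chain to the homotopy coherent diagram it determines by composition in $\mathcal{C}$. Because $\mathcal{C}$ is a groupoid, the set of path components of each side is the set of isomorphism classes of objects of the homotopy category of $\mathcal{C}$, and the map is a bijection on $\pi_0$. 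Thus it is enough to treat a single path component: fixing an object $x$ and restricting to its component, both spaces are connected, and since a map of connected pointed spaces that becomes a weak equivalence after applying $\Omega$ is itself a weak equivalence, I would reduce to showing that $\Omega_x$ of the comparison map is a weak equivalence.

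The heart of the argument is then to identify both based loop spaces with the mapping space $\mathcal{C}(x,x)$ and to check that the comparison map is compatible with these identifications. For $\mathfrak{N}(\mathcal{C})$, which is a Kan complex in which all self-maps of $x$ are equivalences, the loop space at $x$ is the derived mapping space of the quasi-category from $x$ to itself; this recovers $\mathcal{C}(x,x)$ because the derived counit $\mathfrak{C}(\mathfrak{N}(\mathcal{C}))(x,x) \to \mathcal{C}(x,x)$ is a weak equivalence when $\mathcal{C}$ is fibrant, and the source is computed by the Dugger--Spivak necklace description recalled above. For $\mathcal{B}(\mathcal{C})$ the same identification is the classical statement that the diagonal nerve of a connected simplicial groupoid deloops its automorphism simplicial group $\mathcal{C}(x,x)$. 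The main obstacle, and the genuinely new point relative to the simply connected case, is verifying that strictifying a homotopy coherent loop at $x$ returns the same class in $\mathcal{C}(x,x)$: concretely this amounts to an inductive sequence of lifting problems against the cubical face inclusions of $(\Delta^1)^{\times \dim(T)}$ supplied by the necklace maps $\phi$, which are solvable because the mapping spaces are Kan complexes (furnishing the fillers) and because the homotopy invertibility of $1$-simplices corrects the orientation mismatches that the connectivity hypothesis handled in Adams' original spectral sequence comparison. Carrying out this rectification compatibly across all simplicial degrees would finish the proof.
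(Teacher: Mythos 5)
First, a point of calibration: the paper does not prove this proposition at all --- it is quoted as Corollary 2.6.3 of \cite{Hi07} --- so your attempt is being judged on its own merits rather than against an argument in the text. Your skeleton is reasonable and its individual ingredients are correct: $\mathfrak{N}(\mathcal{C})$ is a Kan complex for a fibrant groupoid $\mathcal{C}$ (a quasi-category whose homotopy category is a groupoid, exactly as the paper uses later); the reduction to a $\pi_0$-bijection plus a weak equivalence of based loop spaces is valid for connected components; and the two identifications you invoke --- the counit $\mathfrak{C}(\mathfrak{N}(\mathcal{C}))(x,x)\to\mathcal{C}(x,x)$ being a weak equivalence for fibrant $\mathcal{C}$, and the diagonal nerve of a connected fibrant simplicial groupoid delooping $\mathcal{C}(x,x)$ --- are both standard and correctly placed.

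The genuine gap is exactly where you flag it, and flagging an obstacle is not the same as overcoming it. You never construct the natural map $\mathcal{B}(\mathcal{C})\to\mathfrak{N}(\mathcal{C})$: on $n$-simplices it must turn a strictly composable chain of $n$-simplices of mapping spaces into a simplicial functor $\mathfrak{C}(\Delta^n)\to\mathcal{C}$, which requires an explicit choice of collapse maps out of the cubes $(\Delta^1)^{\times (j-i-1)}$; without that formula there is nothing against which to check compatibility. As a result, the assertion that the induced map on loop spaces agrees, under your two identifications, with the identity of $\mathcal{C}(x,x)$ is a restatement of the remaining content, not a proof of it --- and it cannot be skipped, since a map between two spaces each separately equivalent to $\mathcal{C}(x,x)$ need not be an equivalence. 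The proposed ``inductive sequence of lifting problems against cubical face inclusions'' names neither the lifting problems, nor why their solutions can be chosen coherently across simplicial degrees, nor why the outcome is the comparison map rather than some other self-map of $\mathcal{C}(x,x)$; and the appeal to Adams' spectral-sequence comparison and ``orientation mismatches'' is a red herring, since this proposition is a purely simplicial-categorical statement with no connection to the cobar construction. To close the argument you would need either to make the rectification explicit, or to sidestep it --- for instance by observing that both functors carry weak equivalences of fibrant groupoids to weak homotopy equivalences and reducing to one-object simplicial groups, where the statement becomes the classical comparison of $\overline{W}$-type classifying space constructions.
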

The functor $\mathcal{B}$ is a model for the classifying space functor of a simplicial monoid, as discussed in \cite{Se68} and p. 86 of \cite{Qu73}. 

\section{Proof of the main theorem}
Let $\mathbf{sSet}^0$ be the full subcategory of $\mathbf{sSet}$ consisting of simplicial sets with a single vertex and let $\textbf{sMon}$ be the category of simplicial monoids. For any $K \in \mathbf{sSet}^0$, the simplicial category $\mathfrak{C}(K)$ has a single object so, for simplicity, we will consider $\mathfrak{C}(K)$ as an object in $\mathbf{sMon}$. Denote by $| \cdot |: \mathbf{sSet} \to \mathbf{Top}$ the geometric realization functor and by $\text{Sing}: \mathbf{Top} \to \mathbf{sSet}$ the singular complex functor. In this section we deduce Theorem 1 as a consequence of several results. 

\begin{proposition} If $K \in \mathbf{sSet}^0$ is a Kan complex with $K_0=\{x\}$, the simplicial monoids $\mathfrak{C}(K)$ and $\emph{Sing}(\Omega_x|K|)$ are naturally weakly homotopy equivalent. 
\end{proposition}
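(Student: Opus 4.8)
The plan is to realize both simplicial monoids as models for the based loop space of $|K|$ by routing through the homotopy coherent nerve $\mathfrak{N}$ and the classifying space functor $\mathcal{B}$. Since $K$ has a single vertex $x$, the simplicial category $\mathfrak{C}(K)$ is a one-object simplicial category, i.e. a simplicial monoid, and because $K$ is a Kan complex its homotopy category is a groupoid, so $\pi_0\mathfrak{C}(K) \cong \pi_1(|K|,x)$ is a group and $\mathfrak{C}(K)$ is group-like. First I would apply a functorial fibrant replacement in the Bergner model structure that is the identity on objects, producing a weak equivalence of simplicial monoids $\mathfrak{C}(K) \xrightarrow{\sim} \mathcal{G}$, where $\mathcal{G}$ is a one-object simplicial category whose mapping space is a Kan complex. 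Being group-like is preserved by Bergner weak equivalences, so $\mathcal{G}$ is a fibrant groupoid in the sense of the preceding Proposition.

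Next I would identify the classifying space of $\mathcal{G}$ with $K$. By Hinich's Proposition the natural map $\mathcal{B}(\mathcal{G}) \to \mathfrak{N}(\mathcal{G})$ is a weak homotopy equivalence, since $\mathcal{G}$ is a fibrant groupoid. On the other hand, since $K$ is cofibrant (every simplicial set is) and $\mathcal{G}$ is Bergner-fibrant, the Quillen equivalence $\mathfrak{C} \dashv \mathfrak{N}$ shows that the derived unit $K \to \mathfrak{N}(\mathcal{G})$ is a weak equivalence in the Joyal model structure; as the Quillen model structure is a Bousfield localization of the Joyal one, this map is in particular a weak homotopy equivalence. One also checks that $\mathfrak{N}(\mathcal{G})$ is a Kan complex, being a quasi-category whose homotopy category, namely that of $\mathcal{G}$, is a groupoid. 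Composing, I obtain a natural zig-zag of pointed weak homotopy equivalences $\mathcal{B}(\mathcal{G}) \xrightarrow{\sim} \mathfrak{N}(\mathcal{G}) \xleftarrow{\sim} K$, hence a pointed weak homotopy equivalence $|\mathcal{B}(\mathcal{G})| \simeq |K|$.

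Finally I would invoke the recognition of a group-like simplicial monoid as the loops on its classifying space. Since $\mathcal{B}$ models the classifying space functor of a simplicial monoid and $\mathcal{G}$ is group-like, there is a natural map of topological monoids $|\mathcal{G}| \to \Omega_x|\mathcal{B}(\mathcal{G})|$ into Moore loops that is a weak homotopy equivalence; applying $\text{Sing}$ and composing with the unit $\mathcal{G} \to \text{Sing}|\mathcal{G}|$ gives a natural weak equivalence of simplicial monoids $\mathcal{G} \xrightarrow{\sim} \text{Sing}(\Omega_x|\mathcal{B}(\mathcal{G})|)$. Because $|\mathcal{B}(\mathcal{G})|$ and $|K|$ are pointed CW complexes, the pointed weak homotopy equivalence of the previous paragraph is a pointed homotopy equivalence, so $\text{Sing}(\Omega_x|\mathcal{B}(\mathcal{G})|) \simeq \text{Sing}(\Omega_x|K|)$ as simplicial monoids. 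Concatenating the three natural equivalences $\mathfrak{C}(K) \xrightarrow{\sim} \mathcal{G} \xrightarrow{\sim} \text{Sing}(\Omega_x|\mathcal{B}(\mathcal{G})|) \simeq \text{Sing}(\Omega_x|K|)$ yields the claim, with naturality in $K$ following from the functoriality of every construction used.

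The main obstacle is the last step: producing the weak equivalence of simplicial monoids $\mathcal{G} \simeq \text{Sing}(\Omega_x|\mathcal{B}(\mathcal{G})|)$ as an honest comparison of monoids rather than merely of underlying spaces. This is exactly why Moore loops appear in the statement of Theorem 1, since strict associativity of concatenation of Moore loops lets one promote the homotopy-theoretic identification $\Omega|\mathcal{B}M| \simeq |M|$ for group-like $M$, via the principal quasifibration $|M| \to |E M| \to |\mathcal{B}M|$ following Segal and Quillen, to a map of monoids. A secondary point to verify is that the fibrant replacement in $\mathbf{sCat}$ can be chosen to fix the single object and that $\mathcal{B}$ is homotopy invariant for weak equivalences of simplicial monoids, so that passing from $\mathfrak{C}(K)$ to $\mathcal{G}$ does not disturb the comparison.
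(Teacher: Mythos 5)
Your argument is correct in outline but takes a genuinely different route from the paper, and the step you yourself flag as the main obstacle is precisely the one the paper is arranged to avoid. You run the machine ``forwards'' from $\mathfrak{C}(K)$: fibrantly replace it by a one-object fibrant groupoid $\mathcal{G}$, use Hinich's comparison and the derived unit of the Quillen equivalence to identify $\mathcal{B}(\mathcal{G})$ with $K$ up to weak homotopy equivalence, and then invoke the recognition principle $M \simeq \Omega_x|\mathcal{B}M|$ for a group-like simplicial monoid $M$ to land in $\mathrm{Sing}(\Omega_x|K|)$. That last input (Segal/Quillen group completion, promoted to a comparison of monoids via Moore loops and the quasifibration $|M| \to |EM| \to |\mathcal{B}M|$) is a true but substantially heavier classical theorem than anything the paper uses. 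The paper instead runs ``backwards'' from the target: it feeds $\mathrm{Sing}(\Omega_x|K|)$, which is already a fibrant groupoid with no replacement needed, into Hinich's proposition, uses only the much softer equivalence of spaces $X \simeq B\Omega X$ to get $K \xrightarrow{\simeq} \mathcal{B}(\mathrm{Sing}(\Omega_x|K|)) \xrightarrow{\simeq} \mathfrak{N}(\mathrm{Sing}(\Omega_x|K|))$, and then applies $\mathfrak{C}$ and the counit $\mathfrak{C}\mathfrak{N}(\mathrm{Sing}(\Omega_x|K|)) \xrightarrow{\simeq} \mathrm{Sing}(\Omega_x|K|)$ of the Quillen equivalence. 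The monoid-level comparison is thus carried entirely by the counit, which is automatically a map of one-object simplicial categories, so no recognition theorem for group-like monoids is ever needed. Your approach buys a perhaps more conceptually transparent statement (``$\mathfrak{C}(K)$ is a group-like monoid whose classifying space is $K$, hence it is the loop space of $K$''), at the cost of importing the delooping machinery; the paper's buys a shorter, self-contained argument from the same three ingredients (Hinich, the Quillen equivalence, $X \simeq B\Omega X$) by choosing the direction in which each is easiest to apply. If you want to complete your version, you should either supply a reference for the natural zig-zag of monoid maps realizing $\mathcal{G} \simeq \mathrm{Sing}(\Omega_x|\mathcal{B}(\mathcal{G})|)$, or notice that once you have $K \simeq \mathfrak{N}(\mathcal{G})$ you can bypass it exactly as the paper does.
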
 
\begin{proof}
Since $\text{Sing}(\Omega_x|K|)$ is a fibrant groupoid, by Proposition 5 there is a natural weak homotopy equivalence
$$\mathcal{B}( \text{Sing}(\Omega_x|K|)) \xrightarrow{\simeq}  \mathfrak{N}( \text{Sing}(\Omega_x|K|) ).$$
We know there is a natural weak homotopy equivalence of Kan complexes 
$$K \xleftarrow{\simeq} \mathcal{B}( \text{Sing}(\Omega_x|K|) ),$$ which follows from the well known natural weak homotopy equivalence relating the classifying space functor and the based loop space functor, see Lemma 15.4 of \cite{Ma75} for an explicit formula. Note that $\mathfrak{N}( \text{Sing}(\Omega_x|K|) )$ is a Kan complex since it is a quasi-category whose homotopy category is a groupoid. By Remark 5, the functor $\mathfrak{C}: \mathbf{sSet} \to \mathbf{sCat}$ sends weak homotopy equivalences between Kan complexes to weak equivalences of simplicial categories. Hence by applying $\mathfrak{C}$  we obtain natural weak equivalences of simplicial categories
$$\mathfrak{C}(K) \xleftarrow{\simeq} \mathfrak{C}(\mathcal{B}( \text{Sing}(\Omega_x|K|))) \xrightarrow{\simeq} \mathfrak{C}( \mathfrak{N}( \text{Sing}(\Omega_x|K|))).$$ Finally, since $\mathfrak{C}$ and $\mathfrak{N}$ define a Quillen equivalence between model categories and $\text{Sing}(\Omega_x|K|)$ is a fibrant simplicial category, 
it follows that the counit of the adjunction induces a weak equivalence of simplicial categories
$$ \mathfrak{C} (\mathfrak{N}( \text{Sing}(\Omega_x|K|) ) )\xrightarrow{\simeq} \text{Sing}(\Omega_x|K|).$$
Thus $ \mathfrak{C}(K)$ and $ \text{Sing}(\Omega_x|K|)$ are naturally weakly equivalent as simplicial categories.
\end{proof}


Let $\mathbf{Ch}_{\mathbb{Z}}$ be the category of chain complexes of abelian groups and $C_*: \mathbf{sSet} \to \mathbf{Ch}_{\mathbb{Z}}$ the normalized chains functor. The Eilenberg-Zilber natural transformation $C_*(S) \otimes C_*(S')  \to  C_*(S \times S')$ makes $C_*: \mathbf{sSet} \to \mathbf{Ch}_{\mathbb{Z}}$ into a lax monoidal functor. Thus, there is an induced functor $C_*: \mathbf{sMon} \to \mathbf{dgAlg}_{\mathbb{Z}}$. More precisely, if $M \in \mathbf{sMon}$ we have natural maps
$$C_*(M) \otimes C_*(M) \xrightarrow{EZ} C_*(M \times M) \to C_*(M),$$
where the first map is induced by the Eilenberg-Zilber shuffle map and the second one by the monoid structure of $M$, making $C_*(M)$ into a dg associative algebra.

Since the normalized chains functor sends weak homotopy equivalences to quasi-isomorphisms, Proposition 6 implies the following

\begin{corollary}  If $K \in \mathbf{sSet}^0$ is a Kan complex, the dg associative algebras $C_*( \mathfrak{C}(K))$ and $C_*^{\emph{sing}}(\Omega_x|K|)$ are naturally quasi-isomorphic. 
\end{corollary}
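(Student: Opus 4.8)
The plan is to transport the weak equivalence of simplicial monoids furnished by Proposition 6 through the lax monoidal chains functor and then to observe that no content is lost in doing so. First I would apply Proposition 6 to the Kan complex $K \in \mathbf{sSet}^0$ (with unique vertex $x$, so $K_0 = \{x\}$) to obtain the natural weak homotopy equivalence
$$\mathfrak{C}(K) \xrightarrow{\simeq} \text{Sing}(\Omega_x|K|).$$
The key point at this stage is that every simplicial category occurring in the construction of this map has a single object, so the map is an honest morphism in $\mathbf{sMon}$; and being a weak equivalence in the Bergner model structure for one-object simplicial categories is precisely the condition that its underlying map of simplicial sets be a weak homotopy equivalence.

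Next I would apply the functor $C_*: \mathbf{sMon} \to \mathbf{dgAlg}_{\mathbb{Z}}$ induced by the Eilenberg–Zilber lax monoidal structure on normalized chains. Applied to the morphism above, this produces a morphism of dg associative algebras
$$C_*(\mathfrak{C}(K)) \to C_*(\text{Sing}(\Omega_x|K|)).$$
Because the underlying map of simplicial sets is a weak homotopy equivalence and $C_*$ sends weak homotopy equivalences to quasi-isomorphisms, this morphism is a quasi-isomorphism; its naturality in $K$ follows from the naturality asserted in Proposition 6 together with the functoriality of $C_*$.

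It then remains to identify the codomain with the singular chains on the loop space. By definition $C_*^{\text{sing}}(\Omega_x|K|)$ is the normalized singular chain complex $C_*(\text{Sing}(\Omega_x|K|))$, and the dg algebra structure it acquires from the monoid multiplication on $\text{Sing}(\Omega_x|K|)$ under the Eilenberg–Zilber map is exactly the Pontryagin product coming from concatenation of Moore loops. Hence the target of the quasi-isomorphism is literally $C_*^{\text{sing}}(\Omega_x|K|)$ as a dg algebra, and the corollary follows.

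The main obstacle is entirely contained in the first step: one must be sure that the equivalence of Proposition 6 is a morphism of simplicial monoids rather than merely a weak equivalence of underlying simplicial sets realized through a zig-zag, for otherwise the monoidal functor $C_*$ could only be applied to a diagram of chain complexes and would not directly yield a map of algebras. Once this single-object bookkeeping is confirmed, the remainder of the argument is a formal consequence of $C_*$ being both lax monoidal and homotopical.
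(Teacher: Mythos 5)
Your proposal is correct and is essentially the paper's own argument: the paper derives the corollary in one sentence by applying the lax monoidal normalized chains functor $C_*:\mathbf{sMon}\to\mathbf{dgAlg}_{\mathbb{Z}}$ to the natural weak homotopy equivalence of Proposition 6, using that $C_*$ sends weak homotopy equivalences to quasi-isomorphisms. Your extra care about the map being an honest morphism of one-object simplicial categories (hence of simplicial monoids) is justified and is indeed satisfied by the composite constructed in the proof of Proposition 6.
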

We now describe $C_*(\mathfrak{C}(K))$ in more detail. Since the normalized chains functor commutes with colimits, by Proposition 3 there is a natural isomorphism of chain complexes 
$$C_*(\mathfrak{C}(K)) \cong \underset{(f: T \to K) \in \mathbf{Nec} \downarrow K}{\text{colim}}  C_*((\Delta^{1})^{\times\text{dim}(T)}).$$  Therefore, any element of $C_n(\mathfrak{C}(K))$ is given by the equivalence class $[f,\sigma]$ of a pair $(f: T \to K,\sigma)$ where $(f: T \to K) \in \mathbf{Nec} \downarrow K$ and $\sigma \in C_n( (\Delta^{1})^{\times \text{dim}(T)})$. 
The differential $\partial: C_*(\mathfrak{C}(K)) \to C_{*-1}(\mathfrak{C}(K))$ is given by $\partial[f,\sigma]=[f,\partial \sigma]$. 
The algebra structure $$C_*(\mathfrak{C}(K)) \otimes C_*(\mathfrak{C}(K)) \to C_*(\mathfrak{C}(K))$$ may be described on any two classes $[g,\sigma]$ and $[g',\sigma']$ by
$$[g,\sigma] \otimes [g', \sigma'] \mapsto [g' \vee g, EZ(\sigma' , \sigma) ],$$
where $\vee$ denotes the concatenation (or wedge) product of necklaces and $EZ: C_*(S ) \otimes (S') \to C_*(S \times S')$ the Eilenberg-Zilber map. The class $u:=[s_0(x),*] \in C_0(\mathfrak{C}(K))$ is the unit for this product, where $s_0(x): \Delta^1 \to K$ is the degenerate $1$-simplex at $x$ and $* \in C_0(\Delta^0)$ the unique generator. 
\\

The Alexander-Whitney coproduct $\Delta: C_*(K) \to C_*(K) \otimes C_*(K)$ is defined by
$$ \Delta(\sigma) = \sum_{i=0}^n \sigma|_{[0,...,i]} \otimes \sigma|_{[i,...,n]}$$
for any $\sigma \in C_n(K)$, where $\sigma|_{[0,...,i]} \in C_i(S)$ and $\sigma|_{[i,...,n]} \in C_{n-i}(S)$ denote the simplices obtained by restricting $\sigma$ to its first $i$-dimensional face and its last $(n-i)$-dimensional face, respectively. If $K \in \mathbf{sSet}^0$, then $(C_*(K),\partial, \Delta)$ is a connected dg coassociative coalgebra. 

\begin{theorem} For any $K \in \mathbf{sSet}^0$, the dg associative algebras $\mathbf{\Omega}(C_*(K), \partial, \Delta)$ and $C_*( \mathfrak{C}(K))$ are naturally quasi-isomorphic.
\end{theorem}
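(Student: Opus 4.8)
The plan is to construct an explicit natural morphism of dg associative algebras $\Theta \colon \mathbf{\Omega}(C_*(K),\partial,\Delta) \to C_*(\mathfrak{C}(K))$ and to prove it is a quasi-isomorphism. Since the source is the free graded algebra $T(s^{-1}C_{*>0}(K))$, it suffices to define $\Theta$ on the generators $s^{-1}\sigma$ and extend multiplicatively; dually, by Proposition 3 every class in the target is represented by a necklace $f\colon T\to K$ together with a cube chain, and the product is concatenation of necklaces, so $C_*(\mathfrak{C}(K))$ is generated as an algebra by the single-bead classes. For a nondegenerate simplex $\sigma\colon\Delta^n\to K$ with $n\geq 1$ I would send $s^{-1}\sigma$ to the image in the colimit of the fundamental (top-dimensional) chain of the cube $\mathfrak{C}(\Delta^n)(\alpha,\omega)\cong(\Delta^1)^{\times(n-1)}$ supplied by $\phi_{\Delta^n}$ of Remark 4. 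Compatibility with the two products holds because the Eilenberg--Zilber map carries a tensor product of fundamental classes to the fundamental class of the product cube (up to sign); the order and sign conventions in the formula $[g,\sigma]\otimes[g',\sigma']\mapsto[g'\vee g,\sigma'\times\sigma]$ must be tracked here.

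A crucial feature is that this naive assignment must be corrected on the degree-zero generators. One computes $D(s^{-1}e)=0$ for a $1$-simplex $e$, and the elements $u-s^{-1}e$ (with $u$ the unit) are group-like for the degree-zero product; in $C_*(\mathfrak{C}(K))$ the corresponding group-like elements are the $0$-cells $[e]$ of single-edge necklaces. I would therefore normalize $\Theta$ so that $u-s^{-1}e\mapsto[e]$, i.e. $s^{-1}e\mapsto u-[e]$. On degree zero this is a triangular, hence invertible, change of basis identifying both algebras with the monoid algebra of the free monoid on the nondegenerate $1$-simplices of $K$, and this group-like convention is exactly what is needed to make $\Theta$ a chain map in higher degrees.

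The main obstacle is verifying $\Theta\circ D=\partial\circ\Theta$, which by the Leibniz rule reduces to the generators $s^{-1}\sigma$. I would compute $\partial\,\Theta(s^{-1}\sigma)$ as the cube boundary and invoke Remark 4: the codimension-one faces of $(\Delta^1)^{\times(n-1)}$ are realized by necklace injections of two kinds---those deleting an interior vertex, producing the single-bead classes attached to $d_j\sigma$ for $1\leq j\leq n-1$, and those splitting $\sigma$ at an interior vertex $j$, producing the products $\Theta(s^{-1}\sigma|_{[0,\dots,j]})\cdot\Theta(s^{-1}\sigma|_{[j,\dots,n]})$. These match, respectively, the interior part of $-s^{-1}\partial s^{+1}$ and the reduced coproduct part $(s^{-1}\otimes s^{-1})\Delta s^{+1}$ of the cobar differential. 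The subtle point is that the endpoint faces $d_0\sigma$ and $d_n\sigma$ never appear as faces of the cube, yet they occur in the cobar differential; they are precisely absorbed by multiplicativity together with the group-like normalization, since expanding the split products in their $0$-degree corners reproduces the missing $d_0,d_n$ contributions. The case $n=2$ already illustrates this: there $\Theta(D(s^{-1}\sigma))$ equals the boundary of the fundamental $1$-cell $\Theta(s^{-1}\sigma)$, the apparently extraneous $d_0,d_n$ terms cancelling against the $0$-degree expansion of the split product. Getting this absorption and all signs exactly right is where the real work lies.

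Finally, to show $\Theta$ is a quasi-isomorphism I would filter both sides by the number of tensor factors, respectively the number of beads; $\Theta$ preserves these filtrations. On the associated graded only the length-preserving part of each differential survives, and the comparison reduces to the statement that the normalized simplicial chains of each cube $(\Delta^1)^{\times m}$ are naturally contractible onto the subcomplex spanned by the fundamental classes of its faces. As the Eilenberg--Zilber contraction realizing this is natural and, by Remark 4, compatible with the necklace (colimit) structure, $\Theta$ induces an isomorphism on the $E_1$-page and is therefore a quasi-isomorphism; an explicit homotopy inverse via the homological perturbation lemma is an alternative route. Naturality in $K$ is automatic from the naturality of Proposition 3, of $\phi_T$, and of the Eilenberg--Zilber map.
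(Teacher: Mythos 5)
Your construction of $\Theta$ is, up to a sign convention, exactly the composite of the paper's two comparison maps: the paper interposes the cubical object $F^{\square}_*(K)=\operatorname{colim}_{T\to K}C^{\square}_*(\square^{\dim T})$, proves an honest \emph{isomorphism} of dg algebras $\varphi\colon F^{\square}_*(K)\cong\mathbf{\Omega}(C_*(K),\partial,\Delta)$ (your group-like normalization $s^{-1}e\mapsto u-[e]$ is its $\varphi[f_1,\iota_0]=[s^{-1}\bar f_1]-1$, with the opposite sign, and your matching of cube faces with inner faces and splittings via Remark 4 is its formula for $\partial^{\square}$), and then a quasi-isomorphism $\psi\colon F^{\square}_*(K)\to C_*(\mathfrak{C}(K))$ sending top cubical cells to Eilenberg--Zilber fundamental classes. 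So the map, the chain-map verification, and the absorption of the $d_0,d_n$ faces are all sound and essentially the paper's argument in disguise. (One small slip: $C_*(\mathfrak{C}(K))$ is \emph{not} generated as an algebra by single-bead classes, since normalized simplicial chains on a product of cubes are strictly larger than the image of the Eilenberg--Zilber map; this does not affect the definition of $\Theta$ on the free source, but it does mean $\Theta$ is far from surjective.)

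The genuine gap is in your final step. First, $\Theta$ does not preserve the filtrations you propose. The only word-length filtration compatible with the differentials (which can only increase word length, resp.\ bead count) is the decreasing one by length $\geq p$; but precisely because of the normalization $s^{-1}e\mapsto u-[e]$, a length-$k$ monomial in $1$-simplices is sent to a sum of classes of bead count ranging over $0,\dots,k$, so $\Theta$ does not induce a map of associated gradeds. Second, and more fundamentally, even a repaired word-length spectral sequence would face a convergence problem whenever $K$ has nondegenerate $1$-simplices: in each total degree (already in degree $0$) there are monomials of arbitrarily large word length, the filtration is unbounded, and an isomorphism on $E_1$ does not imply a quasi-isomorphism. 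This is exactly the obstruction that confined Adams' original argument to the simply connected case, i.e., the whole point of the theorem. The paper sidesteps both problems by splitting the comparison as above: $\varphi$ is an isomorphism, so no homological argument is needed on that leg, while $\psi$ is a map of colimits over the \emph{same} index category $\mathbf{Nec}\downarrow K$ induced by a natural transformation $C^{\square}_*(\square^{\dim T})\to C_*((\Delta^1)^{\times\dim T})$, where the Acyclic Models Theorem (with the corepresented functors $\mathcal{Y}(T,\alpha_T,\omega_T)$ as models) applies with no convergence issue. Your alternative suggestion of an explicit contraction or perturbation argument could be made to work, but it would have to be organized necklace-by-necklace in this way rather than by word length.
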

\begin{proof} 
Define a functor $F: \mathbf{Nec} \to \mathbf{Ch}_{\mathbb{Z}}$ as follows. On objects, $F$ is given by setting $F(\Delta^0)= C^{\square}_*(\square^0)$ and, on all other neckalces $T \in \mathbf{Nec}$, $$F(T)= C^{\square}_*( \square^{\text{dim}(T)} ),$$ where $C^{\square}_*( \square^{ n} )$ denotes the chain complex of normalized cubical chains on the standard $n$-cube $\square^n$ considered as a cubical set, or equivalently, the cellular chains on the topological $n$-cube with its standard (cubical) cell structure. We now define $F$ on morphisms.   
The morphisms in $\mathbf{Nec}$ are generated via the monoidal structure $\vee: \mathbf{Nec} \times \mathbf{Nec} \to \mathbf{Nec}$ by the following types of necklace maps. 

\begin{enumerate}
\item $\partial_j \colon \Delta^{n} \hookrightarrow \Delta^{n+1}$ for $j = 1, \dots, n$,
\item $\Delta_{[j], [n+1-j]} \colon \Delta^{j} \vee \Delta^{n+1-j} \hookrightarrow \Delta^{n+1}$ for $j = 1, \dots, n$,
\item $s_j \colon \Delta^{n+1} \twoheadrightarrow \Delta^{n}$ for $j = 0, \dots, n$ and $n>0$, and
\item $s_0 \colon \Delta^1 \twoheadrightarrow \Delta^0$.
\end{enumerate}
Maps of type $(1)$ are the codimension $1$ injective maps between necklaces of length $1$ given by simplicial co-face maps between standard simplices. Maps of type $(2)$ are the codimension $1$ injective maps from a necklace of length $2$ to a necklace of length $1$ given by the first $j$-th coface and last $(n+1-j)$-th coface maps of standard simplices.  Maps $(3)$ and $(4)$ are the surjective maps of necklaces given by the corresponding simplicial co-degeneracy maps between standard simplicies. 

We define $$F(\partial_j )= C^{\square}_*(\delta^0_s): C^{\square}_*(\square^{n-1}) \to C^{\square}_*(\square^{n}), \text{   } 1 \leq s \leq n$$
and
$$F(\Delta_{[j], [n+1-j]})= C^{\square}_*(\delta^1_s): C^{\square}_*(\square^{n-1}) \to C^{\square}_*(\square^{n}), \text{   } 1 \leq s \leq n,$$
where $\delta^0_s: \square^{n-1} \hookrightarrow \square^n$ and $\delta^1_s: \square^{n-1} \hookrightarrow \square^n$, for $1 \leq s \leq n$ are the cubical co-face inclusion maps given by inserting $0$ or $1$ in the $s$ coordinate, respectively. Finally, define $F$ to be the trivial map on morphisms of type $(3)$ and the identity map on morphisms of type $(4)$. These definitions completely determine a monoidal functor $F: \mathbf{Nec} \to \mathbf{Ch}_{\mathbb{Z}}$.

Now consider the functor $F^{\square}_*: \mathbf{sSet}^0 \to \mathbf{dgAlg}_{\mathbb{Z}}$ induced by $F$ as follows.  The underlying chain complex is
$$F^{\square}_*(K) := \underset{(f: T \to K) \in \mathbf{Nec} \downarrow K}{\text{colim}} F(T) = \underset{(f: T \to K) \in \mathbf{Nec} \downarrow K}{\text{colim}}  C^{\square}_*( \square^{\text{dim}(T)} ). $$

Any generator of $F^{\square}_n(K)$, other than $u=[s_0(x),\iota_0] \in F_0^{\square}(K)$, is an equivalence class which may be represented \textit{uniquely} as $[f, \iota_n]$, where:
\begin{itemize}
\item $(f: T \to K)  \in \mathbf{Nec} \downarrow K$ has the property that the image of every bead of $T$ under $f$ is a non-degenerate simplex of $K$, $\text{dim}(T)=n$, and
\item $\iota_n \in C^{\square}_n(\square^n)$ is the generator corresponding to the unique top dimensional non-degenerate $n$-cube of $\square^n.$
\end{itemize}
Any object $(f: \Delta^{n_1} \vee ... \vee \Delta^{n_k} \to K) \in \mathbf{Nec} \downarrow K$  is equivalent to an ordered monomial $(f_1 \vee... \vee f_k)$ where $f_i \in K_{n_i}$ is the $n_i$-simplex determined by the restriction of $f$ to the $i$-th bead $\Delta^{n_i}$. Using this identification, any class in $F^{\square}_n(K)$ may be written uniquely as $[(f_1 \vee... \vee f_k), \iota_n]$, where each $f_i \in K_{n_i}$ is non-degenerate and $n_1+...+n_k-k=n$. 

The differential $\partial^{\square}: F^{\square}_*(K) \to F^{\square}_{*-1}(K)$, which is given by $\partial^{\square}[ (f_1 \vee... \vee f_k),\iota_n]= [(f_1 \vee... \vee f_k), \partial^{\square}\iota_n],$ may now be described as: 
\begin{eqnarray*}
\partial^{\square}[(f_1 \vee... \vee f_k),\iota_n]= \sum_{i=1}^{k} \sum_{j=1}^{n_i-1} \pm [(f_1 \vee... \vee f_i|_{[0,...,j]}\vee f_i|_{[j,...,n_i]} \vee ... \vee f_k), \iota_{n-1}]
\\
- \sum_{i=1}^k \sum_{l=1}^{n_i-1}\pm [ ( f_1 \vee .... \vee f_i|_{[0,...,\hat{l},...,n_i]}\vee ... \vee f_k), \iota_{n-1}].
\end{eqnarray*}
The above formula follows from the cubical boundary formula $$\partial^{\square}\iota_n= \sum_{s=1}^n(-1)^{s}(C^{\square}_*(\delta^1_s)(\iota_{n-1})- C^{\square}_*(\delta^0_s)(\iota_{n-1}))$$ together with the fact that, given $(f_1 \vee .... \vee f_k):T \to K$, the cubical co-face maps $C_*^{\square}(\delta^0_s)$ and $C_*^{\square} (\delta^1_s)$ are realized uniquely through the functor $F$ by inclusion maps in $\mathbf{Nec} \downarrow K$ 
$$(f_1 \vee... \vee f_i|_{[0,...,j]}\vee f_i|_{[j,...,n_i]} \vee ... \vee f_k) \hookrightarrow (f_1 \vee...\vee f_i \vee... \vee f_k)$$
and
$$(f_1 \vee .... \vee f_i|_{[0,...,\hat{l},...,n_i]}\vee ... \vee f_k) \hookrightarrow (f_1 \vee...\vee f_i \vee... \vee f_k).$$
 
 The graded associative algebra structure on $F^{\square}_*(K)$ is 
 $$ [(f_1 \vee... \vee f_k),\iota_n] \otimes [(g_1 \vee... \vee g_l),\iota_m] \mapsto [ (f_1 \vee... \vee f_k \vee g_1 \vee... \vee g_l),\iota_{n+m}],$$ with unit  $u=[s_0(x), \iota_0]$. It is clear that the construction of $F^{\square}_*(K)$ is functorial with respect to maps in $\mathbf{sSet}^0$. The theorem will follow from the following claims:
 \\
 \\
 \textit{Claim 1}: There is a natural quasi-isomorphism of dg algebras $$\psi: F^{\square}_*(K) \xrightarrow{\simeq} C_*(\mathfrak{C}(K)).$$
 \\
 \textit{Claim 2}: There is a natural isomorphism of dg algebras $$\varphi: F^{\square}_*(K) \xrightarrow{\cong} \mathbf{\Omega}(C_*(K), \partial, \Delta).$$
 \\
 \\
\textit{Proof of Claim 1:} Define $\psi$ on any generator $[ (f_1 \vee .... \vee f_k), \iota_n ] \in F^{\square}_n(K)$ where the $f_i \in K_{n_i}$ are non-degenerate simplices and $n_1+...+n_k-k=n$ by
$$\psi  [ (f_1 \vee .... \vee f_k), \iota_n ] := [ (f_1 \vee .... \vee f_k), e^{\times n} ],$$
where $e^{\times n} \in C_*((\Delta^1)^{\times n})$ denotes $e \times ... \times e$ ($n$-times) for $e \in C_1(\Delta^1)$ the unique non-degenerate generator and $\times$ is the Eilenberg-Zilber map. It is easily checked that $\psi$ is a map of differential graded algebras. The fact that $\psi$ is a quasi-isomorphism follows from a standard application of the Acyclic Models Theorem in the following set up. Let $[\mathbf{Nec}^{op}, \mathbf{Set}]$ be the category whose objects are functors $\mathbf{Nec}^{op} \to \mathbf{Set}$ and morphisms are natural transformations. Let $\mathcal{Y}: \mathbf{sSet}_{*,*} \to [\mathbf{Nec}^{op}, \mathbf{Set}]$ be defined on any double pointed simplicial set $(S,x,y) \in \mathbf{sSet}_{*,*}$ by $$\mathcal{Y}(S,x,y)(T):= \mathbf{sSet}_{*,*}((T, \alpha_T, \omega_T), (S,x,y)).$$ The underlying chain complex of $F^{\square}_*(K)$ is precisely $\mathcal{F}(\mathcal{Y}(K,x,x))$, where

$$\mathcal{F}: [\mathbf{Nec}^{op}, \mathbf{Set}] \to \mathbf{Ch}_{\mathbb{Z}}$$ is the functor
defined by
$$\mathcal{F}(X):=  \underset{ (\mathcal{Y}(T,\alpha_T,\omega_T) \to X) \in \mathbf{Nec} \downarrow X  }{\text{colim}} C^{\square}_*(\square^{\text{dim}(T)}).$$
Similarly, the underlying chain complex of $C_*(\mathfrak{C}(K))$ is precisely $\mathcal{G}(\mathcal{Y}(K,x,x))$, where
$$\mathcal{G}: [\mathbf{Nec}^{op}, \mathbf{Set}] \to \mathbf{Ch}_{\mathbb{Z}}$$ is the functor
defined by
$$\mathcal{G}(X):=  \underset{ (\mathcal{Y}(T,\alpha_T,\omega_T) \to X) \in \mathbf{Nec} \downarrow X  }{\text{colim}} C_*((\Delta^1)^{\times \text{dim}(T)}).$$
We may now use $\{ \mathcal{Y}(T, \alpha_T, \omega_T) \}_{T \in \mathbf{Nec}}$ as a collection of models in $[\mathbf{Nec}^{op}, \mathbf{Set}]$, which are acyclic for both $\mathcal{F}$ and $\mathcal{G}$, to conclude that $\psi: \mathcal{F}(\mathcal{Y}(K,x,x))\to \mathcal{G}(\mathcal{Y}(K,x,x))$ is a quasi-isomorphism. \hfill $\Box$
\\

\textit{Proof of Claim 2:} Let
$$\varphi [s_0(x), \iota_0] := 1.$$
If $f_1 \in K_1$ is non-degenerate, let
$$\varphi [ f_1, \iota_0] :=  [s^{-1}\bar{f_1}] - 1$$ where $\bar{f_1} \in C_1(K)$ denotes the class of $f_1 \in K_1$ in the normalized chain complex of $K$. 
For any $f_1 \in K_n$ with $n>1$, let
$$\varphi [f_1, \iota_{n-1}] := [s^{-1} \bar{f_1}].$$
Extend the above as an algebra map to define $\varphi$ on any generator $[ (f_1 \vee ... \vee f_k), \iota_n] \in F^{\square}_n(K)$. It follows from the formula for $\partial^{\square}[ (f_1 \vee ... \vee f_k), \iota_n]$ that $\varphi$ is compatible with differentials. It is easily checked that $\varphi$ is an isomorphism. \hfill $\Box$
\\
\\
\textit{Proof of Theorem 1.} The main theorem now follows by applying Corollary 7 and Theorem 8 to $K=\text{Sing}(X,b)$, the sub Kan complex of $\text{Sing}(X)$ consisting of continuous maps $\sigma: |\Delta^n| \to X$ which send the vertices of $|\Delta^n|$ to $b \in X$. 
\end{proof}

\bibliographystyle{plain}

\begin{thebibliography}{99}

\bibitem[Ad56]{Ad56}
J. Adams,
{On the cobar construction,}
 \textit{Proc. Nat. Acad. Sci. U.S.A}. \textbf{42} (1956), 409-412.
 
\bibitem[CHL18]{CHL18}
J. Chuang, J. Holstein, A. Lazarev, 
{Homotopy theory of monoids and derived localization,} preprint arXiv:1810.00373
 
\bibitem[Co82]{Co82}
J. Cordier, 
{Sur la notion de diagramme homotopiquement coherent,} 
\textit{Cahiers Top. et G\'eom. Diff.} 23 (1982), 93-112.


\bibitem[DuSp11]{DuSp11}
D. Dugger, D. Spivak,
{Rigidification of quasi-categories,}
\textit{Algebr. Geom. Topol.,} \textbf{11}(1) (2011), 225-261.

\bibitem[DuSp211]{DuSp211}
D. Dugger, D. Spivak,
{Mapping spaces in quasi-categories,}
\textit{Algebr. Geom. Topol.,} \textbf{11}(1) (2011), 263-325.


\bibitem[Hi07]{Hi07}
V. Hinich, 
{Homotopy coherent nerve in deformation theory,} preprint arXiv:0704.2503

\bibitem[Ko09]{Ko09}
M. Kontsevich,
{Symplectic geometry of homological algebra}
preprint available at the author's homepage, 2009.

\bibitem[HeTo10]{HeTo10}
K. Hess, A. Tonks,
{The loop group and the cobar construction,}
\textit{Proc. Amer. Math. Soc} \textbf{138} (2010), 1861-1876.

\bibitem[LoVa12]{LoVa12}
J. Loday, B. Vallette,
{Algebraic operads}, 
\textit{Grundlehren der Mathematischen Wissenschaften} 346. Springer-Verlag, Berlin, 2012.


\bibitem[Lu09]{Lu09}
J. Lurie,
{Higher topos theory,}
volume 170 of \textit{Annals of Mathematics Studies.} Princeton University Press, (2009).

\bibitem[Ma75]{Ma75}
P. May,
{Classifying spaces and fibrations,}
\textit{Mem. Amer. Math. Soc.} \textbf{1}, no. 155, (1975)


\bibitem[Qu73]{Qu73}
D. Quillen,
{ Higher algebraic K -theory I Algebraic K -theory: Higher K - theories,}
\textit{Proceedings of the Conference, Battelle Memorial Institute} Lecture Notes in Math., 341, Springer, Berlin, 1973.

\bibitem[Ri14]{Ri14}
E. Riehl, 
{Categorical homotopy theory,}
\textit{New Mathematical Monographs.} Cambridge University Press (2004).

\bibitem[RiZe16]{RiZe16}
M. Rivera, M. Zeinalian, 
{Cubical rigidification, the cobar construction and the based loop space,}
\textit{Algebr. Geom. Topol.}, \textbf{18} (7) (2018), 3789-3820.

\bibitem[RiZe18]{RiZe18}
M. Rivera, M. Zeinalian, 
{Singular chains and the fundamental group,} submitted,
arXiv:1807.06410

\bibitem[RWZ18]{RWZ18}
M. Rivera, F. Wierstra, M. Zeinalian,
{The functor of singular chains detects weak homotopy equivalences,} \textit{Proc. of the AMS} (2019)
arXiv:1808.10237


\bibitem[Se68]{Se68}
G. Segal, 
{Classifying spaces and spectral sequences,}\textit{Inst. Hautes Etudes Sci. Publ. Math.} \textbf{34} (1968), 105-112.
\end{thebibliography}

\end{document}